\documentclass[twoside, 12pt, a4paper, reqno]{amsart}
\usepackage[T1]{fontenc}
\usepackage[utf8]{inputenc}
\usepackage{lmodern}
\usepackage[pdftex]{graphicx}
\graphicspath{{./Images/1/}{./Images/}}
\usepackage{float}
\usepackage{url}
\usepackage{soul}
\usepackage[alphabetic,nobysame,abbrev]{amsrefs}
\usepackage{microtype}
\usepackage[all]{xy}
\usepackage{amssymb,relsize,datetime2}
\usepackage{booktabs,multicol}
\usepackage[pdftex, dvipsnames]{xcolor}
\usepackage{caption}
\usepackage{subcaption}
\usepackage{transparent}
\usepackage{ifpdf}
\usepackage{import}
\usepackage{overpic}
\usepackage{multirow}
\usepackage{tikz-cd}
\usepackage{adjustbox}
\usepackage{parskip}
\usepackage{xpatch}
\usepackage{comment}
\usepackage[normalem]{ulem}
\usepackage[breaklinks,%
   pdftex,%
   final,%
   colorlinks=true,%
   linkcolor=NavyBlue,%
   citecolor=blue,
   filecolor=NavyBlue,%
   menucolor=NavyBlue,%
   urlcolor=NavyBlue,%
   bookmarks=true,%
   bookmarksdepth=2,%
   bookmarksnumbered=true,%
   bookmarksopen=true,%
   bookmarksopenlevel=2,%
   unicode
]{hyperref}
\usepackage[capitalize,noabbrev]{cleveref}

\usepackage{array}
\makeatletter
\def\env@dmatrix{\hskip -\arraycolsep
  \let\@ifnextchar\new@ifnextchar
  \extrarowheight=2ex
  \array{*\c@MaxMatrixCols{>{\displaystyle}c}}}

\makeatother

\makeatletter
\newtheorem*{rep@theorem}{\rep@title}
\newcommand{\newreptheorem}[2]{%
\newenvironment{rep#1}[1]{%
 \def\rep@title{#2 \ref{##1}}%
 \begin{rep@theorem}}%
 {\end{rep@theorem}}}
\makeatother
\newreptheorem{theorem}{Theorem}
\newreptheorem{lemma}{Lemma}
\newreptheorem{proposition}{Proposition}
\newreptheorem{corollary}{Corollary}		

\crefname{question}{Question}{Questions}

\crefname{alpharesults}{Theorem}{Theorems} 
\crefname{alphatheorem}{Theorem}{Theorems}


\usepackage{enumitem}

\usepackage{listings} 

\usepackage{footnotebackref}
\makeatletter
\xpretocmd{\@adminfootnotes}{\let\@makefntext\BHFN@OldMakefntext}{}{}
\renewcommand\@makefntext[1]{%
  \@ifundefined{@makefnmark}
    {}
    {%
     \renewcommand\@makefnmark{%
       \mbox{%
         \textsuperscript{%
           \normalfont
           \hyperref[\BackrefFootnoteTag]{\@thefnmark}%
         }%
       }\,%
     }%
     \BHFN@OldMakefntext{#1}%
  }%
}
\makeatother

\newtheorem{theorem}{Theorem}

\newtheorem{lemma}[theorem]{Lemma}

\newtheorem{question}[theorem]{Question}

\newcounter{alpharesults}



\theoremstyle{definition}

\newtheorem*{definition-nonum}{Definition}

\theoremstyle{remark}

\newtheorem{remark}[theorem]{Remark}

\numberwithin{equation}{section}

\newcommand{\ie}{i.e.~}

\newcommand{\eg}{e.g.~}


\newcommand{\Q}{\mathbb{Q}}
\newcommand{\Z}{\mathbb{Z}}

\definecolor{amaranth}{rgb}{0.75, 
0.17, 0.31} 
\definecolor{carrotorange}{rgb}{0.93, 0.57, 0.13} 
\definecolor{citrine}{rgb}{0.89, 0.82, 0.04} 
\definecolor{dartmouthgreen}{rgb}{0.05, 0.5, 0.06} 
\definecolor{teal}{rgb}{0.0, 0.5, 0.5} 
\definecolor{ballblue}{rgb}{0.13, 0.67, 0.8} 
\definecolor{ceruleanblue}{rgb}{0.16, 0.32, 0.75} 
\definecolor{amethyst}{rgb}{0.6, 0.4, 0.8} 
\definecolor{amber}{rgb}{1.0, 0.75, 0.0} 
\definecolor{burlywood}{rgb}{0.87, 0.72, 0.53} 
\definecolor{dogwoodrose}{rgb}{0.84, 0.09, 0.41} 

\usepackage{mathtools}
\newcommand{\defeq}{\vcentcolon=}

\AtBeginDocument{%
   \def\MR#1{}
}

\pagestyle{plain}

\title{Braid positive surgery diagrams}
\author{Marc Kegel}
\address{Universidad de Sevilla, Dpto.\ de Álgebra,
Avda.\ Reina Mercedes s/n,
41012 Sevilla}
\email{\href{mailto:mkegel@us.es}{mkegel@us.es}, \href{mailto:kegelmarc87@gmail.com}{kegelmarc87@gmail.com}}

\author{Paula Truöl}
\address{School of Mathematics and Statistics,
University of Glasgow,
University Place,
Glasgow,
G12 8QQ,
United Kingdom}
\email{
\href{mailto:paula.truol@glasgow.ac.uk}{paula.truol@glasgow.ac.uk},
\href{mailto:paulagtruoel@gmail.com}{paulagtruoel@gmail.com}}

\date{\today}

\begin{document}

\makeatletter
\@namedef{subjclassname@2020}{%
	\textup{2020} Mathematics Subject Classification}
\makeatother

\subjclass[2020]{57R65; 57K10}

\keywords{Dehn surgery, braid positive links}

\begin{abstract}
In this short note, we prove that every closed, oriented, connected $3$-manifold arises as Dehn surgery along a braid positive link.
\end{abstract}
\maketitle

\section{Introduction}
A classical result by Lickorish and Wallace~\cites{lickorish,wallace} states that every closed, oriented, connected $3$-manifold $M$ arises as \emph{Dehn surgery} along a link $L$ in $S^3$. That is, $M$ can be obtained by removing an open tubular neighbourhood $\nu L$ of $L$ from $S^3$ and gluing back to each torus boundary component of $S^3 \setminus \nu L$ a solid torus by an orientation-preserving diffeomorphism of its boundary. 
Our motivation in this note stems from the following type of question.

\begin{question}\label{question}
    Let $\mathcal{L}$ be a set of links in $S^3$. Can each closed, oriented, connected $3$-manifold be obtained by Dehn surgery along a link $L \in \mathcal{L}$?
\end{question}

Considering the first homology of a $3$-manifold obtained by Dehn surgery directly implies that the number of components of links in $\mathcal{L}$ is unbounded if $\mathcal{L}$ gives a positive answer to Question~\ref{question}. Question~\ref{question} was answered affirmatively for the following sets of links:
\begin{enumerate}
    \item links that are fibered, by \cite{stallings_1978}*{Theorem 3},
    \item links that are quasipositive, by \cite{Rudolph:constr_II}*{Proposition 4},
    \item links that are alternating and hyperbolic, by \cite{adams}*{p.~126},
    \item chainmail links, by \cite{polyak}; see also \cite{agol2023chainmaillinkslspaces}*{Theorem 2.2},
    \item generalized $L$-space links, by \cite{agol2023chainmaillinkslspaces}*{Corollary 4.3}.\footnote{Note that even though $L$-space knots are fibered, not every (generalized) $L$-space link is fibered; see \cite{liu}*{Example 3.9}.}
\end{enumerate}

In contrast to these results, not every closed, oriented, connected $3$-manifold can be obtained by Dehn surgery along a torus link since Dehn surgeries along torus links are never hyperbolic \cites{moser,surgery}. Moreover, the answer to Question~\ref{question} is negative for the set of Montesinos links by~\cite{Ichihara_2009} and thus also for pretzel links. 
Question~\ref{question} also has a negative answer for 2-bridge links since these have at most two components.

In this short note, we show the following.

\begin{theorem}\label{thm:main}
	Every closed, oriented, connected $3$-manifold~$M$ can be obtained by Dehn surgery along a braid positive link $L$ in $S^3$. Moreover, $L$ can be chosen to have at most $s(M) + 1$ many components.
\end{theorem}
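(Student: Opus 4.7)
My plan is to start from a minimal integer surgery description of $M$, put the surgery link in closed braid form, and then convert the braid to a positive one by absorbing its ``negative part'' into a single extra surgery component: the braid axis, carrying a rational framing. More precisely, by Lickorish--Wallace and the definition of $s(M)$, I write $M$ as integer surgery along a link $L \subset S^{3}$ with $s(M)$ components and integer framings $f_{1}, \dots, f_{s(M)}$; by Alexander's theorem, I may isotope $L$ so that $L = \wh{\beta}$ for some $\beta \in B_{n}$. Since $\Delta_{n}^{2}$ is central in $B_{n}$, Garside's theorem gives a decomposition $\beta = P \cdot \Delta_{n}^{-2K}$ with $P \in B_{n}$ positive and $K \in \Z_{\ge 0}$; if $K = 0$ we are already done, so assume $K \ge 1$.

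\textbf{The Rolfsen trade.} Let $A$ be the braid axis of $\wh{P}$ (an unknot transverse to the braid disk, linking each of the $n$ strands of $P$ once). Equip $A$ with the rational framing $1/K$ and each component of $\wh{P}$ with the adjusted framing $f_{j}^{+} \defeq f_{j} + K n_{j}^{2}$, where $n_{j}$ is the number of strands of $P$ belonging to the $j$-th component of $\wh{P}$. A single $K$-fold Rolfsen twist along $A$ then removes $A$, inserts $K$ negative full twists (i.e.\ $\Delta_{n}^{-2K}$) into the $n$ strands of $P$, and corrects every $f_{j}^{+}$ back to $f_{j}$; the resulting surgery diagram is the original $(L, f)$, so Dehn surgery along $\wh{P} \cup A$ with these framings also produces $M$.

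\textbf{Positivity of $\wh{P} \cup A$.} Letting $\iota \colon B_{n} \hookrightarrow B_{n+1}$ be the inclusion adding a trivial $(n+1)$-st strand on the right, I consider the positive braid
\[
\wt{P} \;\defeq\; \iota(P) \cdot (\sigma_{n} \sigma_{n-1} \cdots \sigma_{1}) \cdot (\sigma_{1} \sigma_{2} \cdots \sigma_{n}) \;\in\; B_{n+1}.
\]
A direct strand-by-strand computation shows that the tail factor $(\sigma_{n} \cdots \sigma_{1})(\sigma_{1} \cdots \sigma_{n})$ is a pure braid contributing no crossings among strands $1, \dots, n$ and exactly two positive crossings between strand $n+1$ and each of the first $n$ strands, so $\wh{\wt{P}} = \wh{P} \cup A$ with $A$ corresponding to the last strand. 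Hence $\wh{P} \cup A$ is braid positive with $s(M) + 1$ components, as required.

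\textbf{Main obstacle.} The core bookkeeping lies in (i) matching the sign and magnitude of the Rolfsen twist against the $\Delta_{n}^{-2K}$ factor so that the integer framings on $\wh{P}$ really do adjust back to the original $f_{j}$, and (ii) verifying that the particular positive word $(\sigma_{n} \cdots \sigma_{1})(\sigma_{1} \cdots \sigma_{n})$ contributes precisely the braid axis with the correct linking numbers and without generating spurious crossings among the first $n$ strands.
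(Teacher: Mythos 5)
Your proposal is correct and is essentially the paper's own argument: both rest on Garside's observation that $\Delta_n^{2K}\beta$ is positive for $K$ large enough and on a Rolfsen twist along a $1/K$-framed braid axis to trade those full twists for a single extra unknotted surgery component, which is then braided as an $(n+1)$-st strand meeting the rest of the link only in positive crossings; you simply run the Rolfsen twist in the opposite direction (from $\wh{P}\cup A$ back to $L$, rather than from $L\cup A$ forward to the positive closure) and write out the braided axis explicitly. The only quibbles are cosmetic: a link realizing $s(M)$ need not admit integral framings, so you should allow rational $f_j$ (nothing in your bookkeeping uses integrality), and with the paper's sign convention the twist that returns $1/K$ to $\infty$ and inserts $\Delta_n^{-2K}$ is a $(-K)$-fold Rolfsen twist, exactly as your arithmetic already reflects.
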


Here, we denote by~$s(M)$ the \emph{surgery number} of~$M$, which is the minimal number of components of any link in $S^3$ needed to describe~$M$ as Dehn surgery along that link, see for example~\cite{auckly}. Moreover, recall that a link is called \emph{braid positive} if it arises as the closure of a positive braid, \ie a braid represented by a finite product of the Artin generators of the braid group~\cite{artin_1925} (see Section \ref{sec:braids}).

Braid positive links are fibered~\cite{stallings_1978}, positive and strongly quasipositive, so we can also add \eg the set of positive, fibered links or the set of strongly quasipositive, fibered links to the above list.

\subsection*{Conventions}
All $3$-manifolds are assumed to be smooth and oriented, and are considered up to orientation-preserving diffeomorphism. For us, a link in a $3$-manifold $M$ is a smooth, closed $1$-dimensional submanifold of $M$. Surgery coefficients (see Section~\ref{sec:surgery}) are measured with respect to the Seifert framing. 

\subsection*{Acknowledgements} This project started at the conference on \textit{Trisections and related topics}, October 2025, CIRM, Marseille, and was finished at the workshop on \textit{Knotted surfaces in $4$-manifolds}, December 2025, University Regensburg. We thank Stefan Friedl for helpful comments on a first draft.

\subsection*{Grant support}
MK is supported by a Ram\'on y Cajal grant (RYC2023-043251-I) and PID2024-157173NB-I00 funded by MCIN/AEI/10.13039/ 501100011033, by ESF+, and by FEDER, EU; and by a VII Plan Propio de Investigación y Transferencia (SOL2025-36103) of the University of Sevilla. PT acknowledges support by the Swiss National Science Foundation Postdoc.Mobility fellowship 230329, and she would like to thank the Max Planck Institute for Mathematics in Bonn and the University of Glasgow for their hospitality and support.

\subsection*{Competing interests}  The authors declare none.

\section{Surgery diagrams and Rolfsen twists}\label{sec:surgery}
The two main ingredients in the proof of Theorem~\ref{thm:main} are the so-called Rolfsen twist~\cite{rolfsen:rational_calc} and Garside's theory of braids~\cite{GARSIDE}. To make this note self-contained, we will explain Rolfsen twists in this section and a direct consequence of Garside's work needed in our proof in Section~\ref{sec:braids}. Readers who are already familiar with these concepts can skip ahead to Section~\ref{sec:proof} for the proof of Theorem~\ref{thm:main}. 

First, let us briefly review the basics of Dehn surgery. For more details, the reader is referred, for example, to~\cite{rolfsen_2003}. Let $L = L_1 \cup \dots \cup L_m$ be a link in $S^3$ of $m$ components. The boundary of~$S^3 \setminus \nu L$ is orientation-preservingly diffeomorphic to a disjoint union of~$m$ tori~$\bigcup_{j=1}^m S^1 \times S^1$. The diffeomorphism type of the $3$-manifold obtained by Dehn surgery along~$L$, \ie by gluing $m$ solid tori $\bigcup_{j=1}^m S^1 \times D^2$ to $S^3 \setminus \nu L$, is determined by the image of $\bigcup_{j=1}^m \{1\} \times \partial D^2$ under an orientation-preserving diffeomorphism 
$$\bigcup_{j=1}^m \partial (S^1 \times D^2 )\longrightarrow S^3 \setminus \nu L.$$ 
Given a meridian $\mu_j$ and a Seifert longitude $\lambda_j$ of the link component~$L_j$, the image curves in $\partial (\overline{\nu L_j})$ can be expressed as $p_j \mu_j + q_j \lambda_j$ for coprime integers~$p_j, q_j$ for every $j \in \{1, \dots, m\}$. The quotient $$p_j/q_j \in \Q \cup \{\infty\}$$ is called the \emph{surgery coefficient} of the link component~$L_j$. 
By the above discussion, the diffeomorphism-type of the $3$-manifold obtained by Dehn surgery on the link $L$ is determined by a diagram of $L$ where each component~$L_j$ of~$L$ is labelled by $p_j/q_j$. We call~$L$ together with the labellings $p_j/q_j$, $i\in\{1,\dots,m\}$, a \emph{surgery diagram}. 

Let us remark that a link component $L_j$ with surgery coefficient $\infty=1/0$ means that we identify the meridian of the attached solid torus with the meridian of~$L_j$. The corresponding surgery simply glues in a solid torus in exactly the same way as the removed tubular neighbourhood of $L_j$. Therefore, a component with surgery coefficient $\infty=1/0$ can be removed from a surgery diagram without changing the diffeomorphism type of the surgered manifold.

Let us now turn to Rolfsen twists. To that end, let $L= L_1 \cup \dots \cup L_m$ be a surgery diagram with an unknotted component, say $L_1$, with surgery coefficient $p_1/q_1$. For any integer $N\in\Z$, an \textit{$N$-fold Rolfsen} twist along~$L_1$ creates out of $L$ a new surgery diagram $L'$ as follows:
\begin{itemize}
	\item we cut the exterior of $L_1$, which is a solid torus, along a Seifert disk of $L_1$ and re-glue it with an $N$-fold twist (where $N=1$ corresponds to one right-handed full twist)
    to get a new link $L'= L'_1 \cup \dots \cup L'_m$ (whose components are in natural bijection with the components of $L$), 
	\item the surgery coefficient of $L'_1$ is $\frac{1}{N+\frac{q_1}{p_1}}$ and the surgery coefficients of the other components $L'_j$ 
    are given by $\frac{p_j}{q_j}+N\operatorname{lk}(L_j,L_1)^2$, where $\operatorname{lk}$ denotes the linking number for a choice of orientation of $L$. 
\end{itemize} 
We refer to Figure~\ref{fig:Rolfsen_twist} for a visualization.

\begin{lemma}[Rolfsen twist~\cite{rolfsen:rational_calc}]
	If $L$ and $L'$ are two surgery diagrams that differ by a Rolfsen twist, then the $3$-manifolds obtained by Dehn surgery along $L$ and $L'$, respectively, are orientation-preservingly diffeomorphic. 
\end{lemma}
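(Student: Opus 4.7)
The plan is to realize the $N$-fold Rolfsen twist along $L_1$ as a self-diffeomorphism $\phi\colon V\to V$ of the exterior $V=S^3\setminus\nu L_1$ (which is a solid torus because $L_1$ is unknotted), and to show that pushing $\phi$ through the surgery description preserves the resulting $3$-manifold as long as the coefficients are updated as stated.

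Concretely, I would identify $V$ with the standard solid torus $D^2\times S^1$ so that a Seifert disk for $L_1$, intersected with $V$, becomes the meridional disk $D^2\times\{1\}$. Under this identification, the Seifert longitude $\lambda_1$ of $L_1$ is the meridian of $V$ and the meridian $\mu_1$ of $L_1$ is its longitude. The $N$-fold Rolfsen twist is then the diffeomorphism
$$\phi(re^{i\theta},e^{i\psi})=(re^{i(\theta+N\psi)},e^{i\psi}),$$
which sends each $L_j$ with $j\ge 2$ to $L_j':=\phi(L_j)\subset V\subset S^3$, and which acts on $\partial V$ by $\lambda_1\mapsto\lambda_1$ and $\mu_1\mapsto\mu_1+N\lambda_1$. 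Writing the surgery manifold $M$ obtained from $L$ as $M=W\cup_{f}T_1$, where $W$ denotes $V\setminus\nu(L_2\cup\dots\cup L_m)$ after performing the prescribed surgeries on $L_2,\dots,L_m$, and $T_1$ is a solid torus attached to the $\partial V$-component of $W$ via a map $f$ sending the meridian of $T_1$ to $p_1\mu_1+q_1\lambda_1$, the diffeomorphism $\phi$ restricts to a diffeomorphism from $W$ onto the corresponding manifold $W'$ built from $L_2',\dots,L_m'$, and identifies $M$ with $W'\cup_{\phi\circ f}T_1$. Hence $M$ is unchanged, and it only remains to recognize $\phi\circ f$ as the surgery coefficients claimed in the lemma.

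The coefficient on $L_1'=L_1$ is immediate: $\phi(p_1\mu_1+q_1\lambda_1)=p_1\mu_1+(Np_1+q_1)\lambda_1$, so the new slope is $p_1/(Np_1+q_1)=1/(N+q_1/p_1)$. For $j\ge 2$, the meridian $\mu_j$ is intrinsic to the tubular neighborhood and is carried to $\mu_j'$, but the image $\phi(\lambda_j)$ of the Seifert longitude is in general \emph{not} the Seifert longitude $\lambda_j'$ of $L_j'$ in $S^3$. The main technical step is to show that
$$\phi(\lambda_j)=\lambda_j'+N\operatorname{lk}(L_j,L_1)^2\,\mu_j'$$
on $\partial\nu L_j'$. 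Since the class of $L_j$ in $H_1(V)\cong\mathbb{Z}$ equals $\ell_j:=\operatorname{lk}(L_j,L_1)$ times the core, the curve $L_j$ meets a meridional disk of $V$ in $\ell_j$ algebraic strands, each of which gains $N$ full twists relative to every other strand under $\phi$; a direct computation of the self-linking of $\phi(L_j)$ with $\phi(\lambda_j)$ in $S^3$, using, e.g., a Seifert surface for $L_j'$ that meets $L_1$ transversely, then yields the shift $N\ell_j^2$. Substituting back into $p_j\mu_j+q_j\lambda_j$ produces the new slope $p_j/q_j+N\operatorname{lk}(L_j,L_1)^2$, completing the plan.
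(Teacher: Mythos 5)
Your proposal is correct and follows essentially the same route as the paper: realizing the Rolfsen twist as a diffeomorphism of the exterior of $L_1$ (a solid torus) and tracking its effect on the meridian--longitude bases of all boundary tori, with the coefficient changes $\mu_1\mapsto\mu_1'+N\lambda_1'$ and $\lambda_j\mapsto\lambda_j'+N\operatorname{lk}(L_j,L_1)^2\mu_j'$ matching the paper's formulas exactly. The only difference is that you supply more detail (the explicit twist map and the homological computation of the framing shift $N\ell_j^2$) where the paper simply asserts that the boundary action is ``straightforward to check.''
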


\begin{proof}
	The process of cutting the exterior of $L_1$ along a Seifert disk of~$L_1$ and re-glueing it with an $N$-fold twist describes a diffeomorphism of the exterior of $L$ to the exterior of $L'$. It is straightforward to check that this diffeomorphism acts on the boundary tori as
	\begin{align*}
		\mu_1 &\longmapsto \mu'_1+N\lambda'_1\\
		\lambda_1 &\longmapsto \lambda'_1\\
		\mu_j &\longmapsto \mu'_j\\
		\lambda_j &\longmapsto N\operatorname{lk}(L_j,L_1)^2\mu'_j+\lambda'_j
	\end{align*}
and thus the surgery coefficients transform as claimed. 
\end{proof}

\begin{figure}[htbp] 
	\centering
    \vspace{1em}
   \begin{overpic}[width=0.65\textwidth]{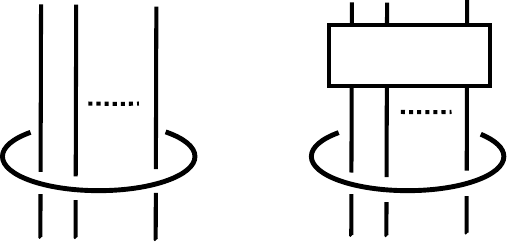}
   \put(48,25){$\cong$}
   \put(78,35){$N$}
   
   \put(39,10){$L_1$}
   \put(55,10){$L'_1$}
   
   \put(5,50){$L_2$}
   \put(65,50){$L'_2$}
   \put(13,50){$L_3$}
   \put(75,50){$L'_3$}
   \put(29,50){$L_m$}
   \put(90,50){$L'_m$}

   \put(-6,10){$\frac{p_1}{q_1}$}
   \put(100,10){$\frac{1}{N+\frac{q_1}{p_1}}$}

   \put(5,-5){$\frac{p_2}{q_2}$}
   \put(60,-5){$\frac{p_2}{q_2}+N\operatorname{lk}^2$}

   \put(13,-5){$\frac{p_3}{q_3}$}

   \put(29,-5){$\frac{p_m}{q_m}$}
   \put(90,-5){$\frac{p_m}{q_m}+N\operatorname{lk}^2$}
 \end{overpic}
  \vspace{1em}
    \caption{An $N$-fold Rolfsen twist.}
    \label{fig:Rolfsen_twist}
\end{figure}

\section{Garside theory}\label{sec:braids}

The second key ingredient in our proof of Theorem~\ref{thm:main} is Garside's theory of braids~\cite{GARSIDE}. A \textit{braid} on $k$ strands is a collection of $k$ disjoint, properly embedded arcs in the cylinder $[0,1]\times D^2 $ that run monotonically in the $[0,1]$-direction from $k$ fixed points in $\{0\} \times D^2$ to $\{1\} \times D^2$. Braids up to ambient isotopy of $[0,1]\times D^2$ fixing $\{0,1\} \times D^2$ pointwise form a group called the \emph{braid group}~$B_k$. The group operation is defined by stacking braids. Closing a braid by identifying $\{0\} \times D^2$ and $\{1\} \times D^2$, connecting the endpoints of the arcs, and identifying the resulting solid torus with the tubular neighbourhood of an unknot in $S^3$ produces an oriented link in $S^3$, see Figure~\ref{fig:braid}.\footnote{%
Note that there are infinitely many ways to identify a solid torus with the tubular neighborhood of a knot. Here we choose the identification that maps $S^1\times \{0\}$ onto the Seifert longitude of the unknot.
} By Alexander's theorem, every link arises in this way~\cite{alexander}. We refer to \cite{birmanbrendle} for more details on braids and their closures. 

\begin{figure}[htbp] 
\centering
   \begin{overpic}[width=0.88\textwidth]{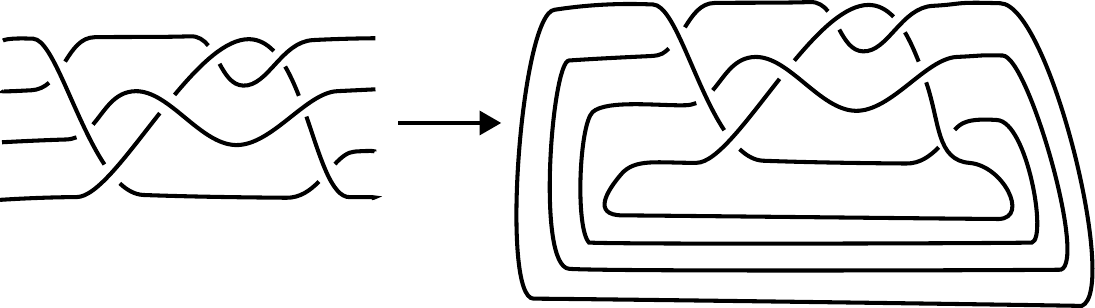}
   \put(-5,24){$1$}
   \put(-5,19){$2$}
   \put(-5,14){$3$}
   \put(-5,9){$4$}
   
   \put(0,5){$\sigma_1 \sigma_2 \sigma_3^{-1} \sigma_2 \sigma_1^{-1} \sigma_1^{-1} \sigma_2^{-1} \sigma_3$}
 \end{overpic}
    \caption{A braid (left) and its closure (right).}
    \label{fig:braid}
\end{figure}

The braid group $B_k$ is generated by the \textit{Artin generators}~\cite{artin_1925} denoted~$\sigma_1, \dots, \sigma_{n-1}$, where $\sigma_i$ corresponds to a positive half–twist exchanging two neighbouring strands with indices $i$ and~$i+1$, see Figure~\ref{fig:braid}. A braid on $k$ strands is \emph{positive} if it can be represented by a word in~$B_k$ that is a product of positive powers of the Artin generators. The closure of a positive braid is called a \emph{braid positive} link. 
We denote by 
$$T_k=(\sigma_1 \dots \sigma_{k-1})^{k}$$ 
a full twist in $B_k$, see for example Figure~\ref{fig:full_twist}. The following lemma is a direct corollary from Garside's work~\cite{GARSIDE}.\footnote{Note that in Garside's notation, we have $T_k = \Delta^2$. 
} 
For completeness and to keep our arguments purely geometric, we present a proof.

\begin{lemma}[{\cite{GARSIDE}}]\label{lem:Garside}
	For each braid $\beta$ on $k$ strands, there is an integer~$N \geq 0$ such that~
    $T_k^{N} \beta$ is a positive braid on $k$ strands. More concretely, we can choose $N$ to be the number of negative Artin generators in a braid word representative of $\beta$.
\end{lemma}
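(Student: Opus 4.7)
The plan is to prove the lemma by induction on $N$, the number of negative Artin generators appearing in a chosen braid word representative of $\beta$. In the base case $N=0$, the word is already positive and $\Delta_k^{0}\beta = \beta$ is itself a positive braid on $k$ strands, so there is nothing to show.

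For the inductive step I would invoke two standard facts coming from Garside's work. Fact (a): the full twist $\Delta_k=(\sigma_1\cdots\sigma_{k-1})^{k}$ is central in $B_k$. Fact (b): for every $i\in\{1,\dots,k-1\}$ there exists a positive word $w_i$ in the Artin generators with $\Delta_k = w_i\sigma_i$ in $B_k$, i.e.\ each Artin generator appears as the last letter of some positive representative of $\Delta_k$. Fact (b) can be deduced from the analogous property of the Garside half-twist $\Delta$ (which satisfies $\Delta^2=\Delta_k$), whose defining property is precisely that it admits a positive representative ending in any prescribed $\sigma_i$; writing $\Delta=u_i\sigma_i$ with $u_i$ positive yields $\Delta_k=(\Delta u_i)\sigma_i$.

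Granting (a) and (b), suppose $\beta$ has a word with $N\ge 1$ negative generators, and locate the leftmost negative letter, so that $\beta=\alpha\,\sigma_i^{-1}\,\gamma$ in $B_k$ with $\alpha$ a positive word and $\gamma$ a word containing exactly $N-1$ negative generators. Using centrality of $\Delta_k$ to slide one factor of $\Delta_k$ past $\alpha$ and to split off $\Delta_k^{N-1}$, we obtain
$$\Delta_k^{N}\beta \;=\; \alpha\,\bigl(\Delta_k\sigma_i^{-1}\bigr)\,\Delta_k^{N-1}\gamma \;=\; \alpha\, w_i\, \Delta_k^{N-1}\gamma.$$
By the inductive hypothesis applied to $\gamma$, the word $\Delta_k^{N-1}\gamma$ represents a positive braid on $k$ strands; concatenating with the positive word $\alpha w_i$ produces a positive word representing $\Delta_k^{N}\beta$.

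The only real obstacle in this plan is establishing the positive right-divisibility statement (b); the rest is bookkeeping enabled by centrality. This is precisely the content that one imports from Garside's theory, which guarantees that every simple element, in particular every $\sigma_i$, right-divides the Garside element in the positive braid monoid. Everything else in the argument is an elementary inductive manipulation of braid words.
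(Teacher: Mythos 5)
Your proposal is correct and follows essentially the same route as the paper: both arguments rest on the centrality of $\Delta_k$ and the positivity of $\Delta_k\sigma_i^{-1}$, and both absorb one full twist per negative letter by an inductive bookkeeping argument (you induct on the number of negative letters and jump to the leftmost one, while the paper inducts on word length and processes letters one at a time, but this is a cosmetic difference). Your derivation of the positivity of $\Delta_k\sigma_i^{-1}$ from the right-divisibility property of the half-twist $\Delta$ is a slightly more detailed justification than the paper's appeal to a figure, but the content is the same.
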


\begin{proof}
We start with two simple observations. First, the full twist $T_k$ in~$B_k$ commutes with any other element $\beta$ of $B_k$, \ie $T_k\beta=\beta T_k$. See Figure~\ref{fig:full_twist} for an illustration. Second, for any Artin generator $\sigma_i$, the braid $T_k\sigma_i^{-1}$ is positive; see Figure~\ref{fig:full_twist} again. 

\begin{figure}[htbp] 
	\centering
    \begin{overpic}[width=0.88\textwidth]{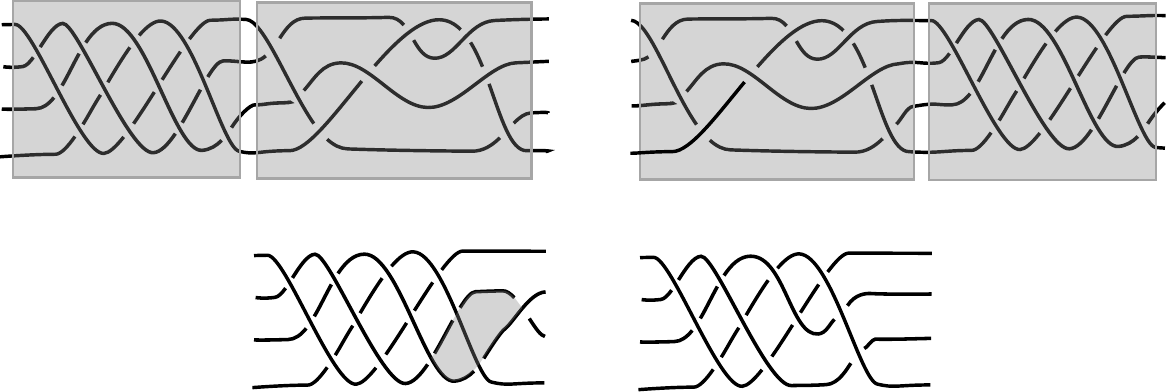}
   \put(10,14){$T_4$} 
   \put(30,14){$\beta$}

   \put(49,25){$\cong$}
 
   \put(65,14){$\beta$}
   \put(86,14){$T_4$} 

   \put(49,5){$\cong$}

   \put(30,-5){$T_4$} 
   \put(43,-5){$\sigma_2^{-1}$}
   \put(60,-5){positive}
 \end{overpic}
 \vspace{1em}
    \caption{The first row shows that a full twist $T_k$ commutes with any other braid $\beta$ on $k$ strands. The second row visualizes that a full twist $T_k$ followed by a negative Artin generator~$\sigma_i^{-1}$ yields a positive braid.}
    \label{fig:full_twist}
\end{figure}

If $\beta$ is a positive braid, we can set~$N = 0$. Otherwise, we write $\beta$ as a word in the Artin generators and distinguish two cases. In the first case, the first letter of $\beta$ is a positive generator, say $\beta=\sigma_i\beta'$. Using the above observations, we can then write
$T_k^N\beta=\sigma_iT_k^N\beta'.$
In the second case, if the first letter of $\beta$ is a negative Artin generator, say~$\beta=\sigma_i^{-1}\beta'$, then we write
\begin{equation*}
    T_k^N\beta 
    = T_k^{N-1} T_k\sigma_i^{-1}\beta'
    = T_k^{N-1} p \beta' = p T_k^{N-1} \beta',
\end{equation*}
	where $p$ is a positive braid. The claim follows using an induction on the length of the braid word $\beta$.
\end{proof}

\section{Braid positive surgery diagrams}\label{sec:proof}

In this section, we finally prove Theorem~\ref{thm:main}.

\begin{proof}[{Proof of Theorem~\ref{thm:main}}]
Given a closed, oriented, connected $3$-manifold~$M$, let $L$ be a link whose number of components realizes $s(M)$. We can represent $L$ as the closure of a braid $\beta$ on, say, $k$ strands. Figure~\ref{fig:pf}(a) represents a braid diagram $\beta$ of $L$.
Inserting a new component with surgery coefficient $\infty=1/0$ and performing an $N$-fold Rolfsen twist, we obtain the diagram shown in Figure~\ref{fig:pf}(b). Figure~\ref{fig:pf}(c) shows the result of an isotopy of the new link diagram performed on the new component. The link $L^\prime$ in Figure~\ref{fig:pf}(c) arises as the closure of a braid~$\beta^\prime$. Note that~$\beta^\prime$ is a positive braid whenever the braid $\alpha \defeq T_k^{N} \beta$ on $k$ strands is a positive braid. The theorem thus follows from Lemma~\ref{lem:Garside} choosing~$N$ sufficiently large.
\end{proof}

\begin{figure}[htbp] 
\centering
    \begin{overpic}[width=0.88\textwidth]{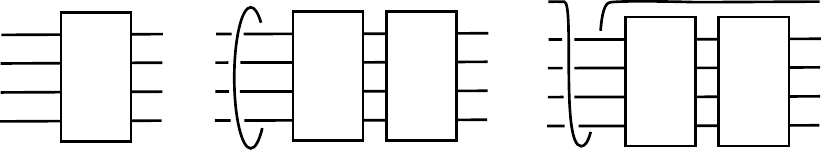}
   \put(11,8){$\beta$}
   \put(8,-4){(a)}

   \put(21,8){$\cong$}
   
   \put(31,20){$\frac{1}{N}$}
   \put(37,8){$T_k^N$}
   \put(50,8){$\beta$}
   \put(42,-4){(b)}

   \put(61.5,8){$\cong$}

   \put(71,20){$\frac{1}{N}$}
   \put(78,8){$T_k^N$}
   \put(91,8){$\beta$}
   \put(81,-4){(c)}
 \end{overpic}
  \vspace{1em}
\caption{Transforming a braid into a positive braid by a single surgery.}
    \label{fig:pf}
\end{figure}

\begin{remark}
    Note that Rudolph's proof for quasipositive links~\cite{Rudolph:constr_II}, in very brief, inserts a new unknotted component for each negative band generator in a band presentation of a braid representing $L$, so it increases both the braid index and the number of components of $L$ potentially much more than the procedure described in our above proof. Moreover, his method does not immediately carry over to obtain braid positive links.
\end{remark}    

\bibliographystyle{alpha}
\bibliography{biblio}

\end{document}